\theoremstyle{plain} 
\newtheorem{theorem}{Theorem}
\newtheorem{lemma}[theorem]{Lemma}
\newtheorem{proposition}[theorem]{Proposition}
\theoremstyle{definition} 
\theoremstyle{remark} 
\begin{document}
\title{Quadratic Convergence of a Projection Method for a Plane Curve Feasibility Problem}

\author{
Jordan Collard\thanks{Centre for Optimisation and Decision Science, Curtin University, Perth, WA, Australia.
Emails: \texttt{jordan.collard@postgrad.curtin.edu.au}, \texttt{scott.lindstrom@curtin.edu.au}}
\and
Scott B. Lindstrom\footnotemark[1]
}

\date{} 

%
%
\maketitle

\abstract{Under conditions that prevent tangential intersection, we prove quadratic convergence of a projection algorithm for the feasibility problem of finding a point in the intersection of a smooth curve and line in $\mathbb{R}^2$. This nonconvex problem has been studied in the literature for both Douglas--Rachford algorithm (DR) and circumcentered reflection method (CRM), because it is prototypical of inverse problems in signal processing and image recovery. 
This result highlights the potential of extrapolated methods to meaningfully accelerate convergence in structured feasibility problems. Numerical experiments confirm the theoretical findings. Our work lays the foundations for extending such results to higher dimensional problems.}

\section{Introduction}
In a Hilbert space $\mathcal{H}$, the $r$-sets feasibility problem is to find a point within the  intersection of $r$ finite closed sets, \(C_1, \dots, C_r \subseteq \mathcal{H}\). 
Many optimisation tasks, including inverse problems \cite{artacho2014douglas}, image reconstruction \cite{bauschke2002phase}, signal recovery \cite{combettes2007douglas} and even combinatorial problems \cite{schaad2010modeling}, can be reformulated as feasibility problems. In these cases, the goal is to find a point in the intersection of constraint sets that model the problem's structure.
 
Projections methods are commonly utilised to solve feasibility problems as they break down hard problems into simpler subproblems involving projections onto individual sets. Instead of attempting to compute the intersection directly, we apply successive projections to iteratively approach a feasible point. 

Among projection-based methods, the Douglas--Rachford (DR) algorithm is notable for its strong theoretical guarantees in convex settings, and has shown practical effectiveness for nonconvex problems such as phase retrieval \cite{bauschke2002phase} and graph colouring \cite{aragon2018solving}. A survey of the method's development and applications over the past six decades can be found in \cite{lindstrom2021survey}, and includes the history of its study in the present context (plane curves). 

The Douglas--Rachford method has been extensively studied for the class of plane curve problems we consider here (see, for example, \cite{aragon2013global, borwein2018dynamics, borwein2011douglas,  lindstrom2017computing}), and converges linearly. For many problems, and for all the problems we consider here, the iterates of the Douglas--Rachford method exhibit a spiralling behaviour as they converge to a fixed point (which, if not already a solution to the problem, readily yields one). So motivated, researchers have considered the sequential iterates as a discrete time dynamical system, and brought to bear tools from dynamical systems theory. For problems from the class we consider here, that spiralling phenomenon motivated the construction of explicit Lyapunov functions that describe the local convergence \cite{benoist2015sphere,dao2019lyapunov}. The zeros of the Lyapunov function correspond to the fixed points of the operator that defines the Douglas--Rachford dynamical system (i.e. the Douglas--Rachford operator). Motivated by these known constructions, a class of meta-algorithms called Lyapunov surrogate methods was introduced in \cite{lindstrom2022computable}. These methods aim to approximate the centre of this spiral (i.e. the limiting fixed point), by minimising spherical functions that are intended to serve as surrogates of the Lyapunov function. It was further shown in \cite{lindstrom2022computable} that, for plane curve problems like those we consider, the circumcentered reflection method (CRM, see, for example, \cite{arefidamghani2021circumcentered, circumcentering, behling2018linear, behling2019convex}) may be characterised as such a method.

The Lyapunov surrogate method is currently state-of-the-art for the feasibility problem of finding compactly supported smooth orthogonal wavelets with symmetry or cardinality constraints \cite{dizon2022centering}. It has also been benchmarked on basis pursuit problems \cite{calcan2024admm}, which belong to a more general class than feasibility, where it performed well for randomised data, but exhibited limited improvement when applied to real audio. 


Recently, \cite{dizon2022circumcentering} showed that for the class of problems we consider here, CRM enjoys local quadratic convergence. That proof involved tying CRM to Newton--Raphson method for finding roots of functions on $\mathbb{R}$, and does not extend in any apparent way to analysing the Lyapunov surrogate method. Notwithstanding, it was included for comparison in the numerical experiments on spheres and non-tangential hyperplanes (which reduce to 2-dimensional plane curve problems by symmetry), and these experiments suggested that Lyapunov surrogate methods similarly enjoy a quadratic rate of convergence for problems in this class. In this paper, we provide the proof of that local quadratic convergence. The proof is entirely new and makes explicit use of the relationship that the method has with the existing known Lyapunov functions. We also present numerical results that illustrate the realised quadratic rate.

\section{Preliminaries}
Let $\mathcal{H}$ be some Hilbert space with inner product $\langle\cdot,\cdot\rangle$ and induced norm $\|\cdot\|$. Let $\mathcal{B}_r(y)$ be an open ball about $y$ of radius $r>0$. Suppose $C$ is a nonempty set in $\mathcal{H}$.
The projection of $x$ onto a nonempty $C\subseteq \mathcal{H}$ for all $x\in \mathcal{H}$ is defined by:
\begin{equation*}
     \mathbf{P}_C(x) := \Big\{p\in C: \|p-x\| =\underset{c\in C}{\inf}\|c-x\| \Big\}.
\end{equation*}
When $C$ is nonconvex, $\mathbf{P}_C$ can be a set-valued map that may be empty or contain more than one point. We are only interested in the case when $C$ is nonempty, so we work with a selector for $P_C$, that is a map 
\begin{equation*}
    P_C:\mathcal{H}\rightarrow C: x \mapsto P_C(x)\in \mathbf{P}_C(x),\quad \text{ so }P_C^2=P_C.
\end{equation*}

%

The reflection of $x\in \mathcal{H}$ about $C$ is defined as $R_C:= 2P_C-I,$ where $I$ is the identity operator.


\subsection{Douglas--Rachford Method}
The Douglas--Rachford method was initially introduced in relation to nonlinear heat flow problems \cite{douglas1956numerical} and later in the setting of maximal monotone operators \cite{lions1979splitting}. DR is widely used to solve the 2-set feasibility problem: 
$$
{\rm Find}\quad x \in A \cap B.
$$
Given two closed sets A, B and an initial point $x_0$, the DR is defined as the sequence $\{x_n\}_{n=1}^\infty$ generated by,
\begin{equation}
    x_{n+1}\in T_{A,B}(x_n)\quad\text{where}\quad T_{A,B}:= \frac{1}{2}(I+R_B\circ R_A).
\end{equation}

\begin{proposition}[Douglas--Rachford fixed point property {\cite{lions1979splitting}}]
    Let A and B be closed, convex sets, where $A\cap B \neq \emptyset$. Given any $x_0\in \mathcal{H}$, $\{x_{n}\}$ converges weakly to some point $x^* \in FixT$ and $\{P_A(x_n)\}$ is weakly convergent to $P_A(x^*)\in A\cap B$. 
    \label{DR fixed}
\end{proposition}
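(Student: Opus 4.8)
The plan is to recognise the iteration as a Krasnoselskii--Mann iteration of a firmly nonexpansive operator, derive weak convergence of $\{x_n\}$ from standard fixed-point theory, and then treat the shadow sequence $\{P_A(x_n)\}$ separately. First I would record that, since $A$ and $B$ are closed and convex, the projectors $P_A$ and $P_B$ are firmly nonexpansive, hence the reflectors $R_A$ and $R_B$ are nonexpansive, hence $R_B\circ R_A$ is nonexpansive, and therefore $T:=T_{A,B}=\tfrac{1}{2}(I+R_B\circ R_A)$ is firmly nonexpansive (equivalently, $\tfrac{1}{2}$-averaged). Next I would check that $\mathrm{Fix}\,T\neq\emptyset$: if $z\in A\cap B$ then $P_Az=z$, $R_Az=z$, $P_Bz=z$, $R_Bz=z$, so $Tz=z$; in particular $A\cap B\subseteq\mathrm{Fix}\,T$.

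With $\mathrm{Fix}\,T\neq\emptyset$ in hand, weak convergence of $\{x_n\}$ is the Krasnoselskii--Mann theorem for averaged operators: the sequence is Fej\'{e}r monotone with respect to $\mathrm{Fix}\,T$ (so it is bounded and $\|x_n-z\|$ converges for every $z\in\mathrm{Fix}\,T$), the firm-nonexpansiveness inequality evaluated at a fixed point $z$ gives $\sum_n\|x_n-x_{n+1}\|^2<\infty$ and hence $(I-T)x_n\to0$ strongly, and then the demiclosedness principle for $I-T$ combined with Opial's lemma yields a unique weak cluster point $x^\ast\in\mathrm{Fix}\,T$, i.e. $x_n\rightharpoonup x^\ast$.

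I would then characterise the fixed point set. If $x\in\mathrm{Fix}\,T$ and $a:=P_Ax$, then $R_Ax=2a-x$ and $R_B(2a-x)=x$ force $P_B(2a-x)=a$, so $a\in B$, while $a\in A$ by construction; hence $a\in A\cap B$. Together with the reverse inclusion noted above this gives $P_A(\mathrm{Fix}\,T)=A\cap B$, and in particular $P_A(x^\ast)\in A\cap B$.

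It remains to prove $P_A(x_n)\rightharpoonup P_A(x^\ast)$, and this is the step I expect to be the main obstacle, since $P_A$ is not weakly sequentially continuous. Writing $a_n:=P_A(x_n)$ and $b_n:=P_B(R_Ax_n)$, a short computation shows $(I-T)x_n=a_n-b_n$, so $a_n-b_n\to0$ \emph{strongly}; as $\{a_n\}$ is bounded, every weak cluster point lies in $A$ and, by this strong convergence, also in $B$, hence in $A\cap B$. The delicate point is that the cluster point is unique and equals $P_A(x^\ast)$. To obtain this I would pass to the monotone-operator formulation using the normal-cone operators $N_A,N_B$: the projection variational inequalities give $u_n:=x_n-a_n\in N_A(a_n)$ and $-u_n+(a_n-b_n)\in N_B(b_n)$, and then monotonicity of $N_A$ and $N_B$, together with $a_n-b_n\to0$, $x_n\rightharpoonup x^\ast$, the identity $u_n+b_n=x_{n+1}$, and the demiclosedness of the graphs of $N_A$ and $N_B$, lets one identify any weak limit of $\{a_n\}$ with the unique $a^\ast\in A\cap B$ satisfying $x^\ast-a^\ast\in N_A(a^\ast)$, namely $a^\ast=P_A(x^\ast)$. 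This shadow-convergence argument is exactly the refinement of the Lions--Mercier statement, and it is where essentially all of the technical work lies.
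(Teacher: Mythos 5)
The paper does not prove this proposition at all---it is quoted as a known result from the cited literature---so there is no in-paper argument to compare against; your sketch is essentially the standard modern proof (firm nonexpansiveness of \(T\), \(A\cap B\subseteq\mathrm{Fix}\,T\), Krasnoselskii--Mann/Fej\'er monotonicity with asymptotic regularity and demiclosedness of \(I-T\) for the weak convergence of \(x_n\), the identity \((I-T)x_n=P_Ax_n-P_B R_A x_n\), and a monotone-operator argument for the shadows). All of those ingredients are correct, and your identification \(P_A(\mathrm{Fix}\,T)\subseteq A\cap B\) is fine.

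One step as stated would not go through: you invoke ``demiclosedness of the graphs of \(N_A\) and \(N_B\)'' to pass to the limit with \(a_n\rightharpoonup a\) and \(u_n=x_n-a_n\rightharpoonup x^\ast-a\). The graph of a maximal monotone operator is sequentially closed in the weak\(\times\)strong and strong\(\times\)weak topologies, but \emph{not} in weak\(\times\)weak, and here neither coordinate converges strongly (only \(a_n-b_n\to 0\) is strong). The correct way to finish---this is precisely Svaiter's contribution, since the shadow weak convergence is not in Lions--Mercier---is to write down the two monotonicity inequalities for \(N_A\) at \((a_n,u_n)\) and \(N_B\) at \((b_n,2a_n-x_n-b_n)\) against the test point \((P_Ax^\ast,\,x^\ast-P_Ax^\ast)\) and add them, so that the troublesome bilinear terms \(\langle a_n,u_n\rangle\)-type cancel up to quantities controlled by \(a_n-b_n\to 0\) and the convergence of \(\|x_n-z\|\) for \(z\in\mathrm{Fix}\,T\); only then does every weak cluster point of \(a_n\) satisfy \(x^\ast-a\in N_A(a)\), i.e.\ \(a=P_A(x^\ast)\), giving uniqueness of the cluster point. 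With that replacement (or a citation to Svaiter 2011 or Bauschke--Combettes, where the argument is carried out), your plan is complete.
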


While the nonconvex setting is more complicated, Douglas--Rachford method exhibits a tendency to solve such problems with greater frequency that other projection-based solvers (e.g. alternating projections method). For this reason, reflection-based methods, and Douglas--Rachford in particular, have motivated the design of various other projection methods for solving nonconvex feasibility; these include CRM and the Lyapunov surrogate method.

\subsection{Lyapunov Surrogate Method}
    
    \begin{figure}
    \centering
    \includegraphics[width=0.5\linewidth]{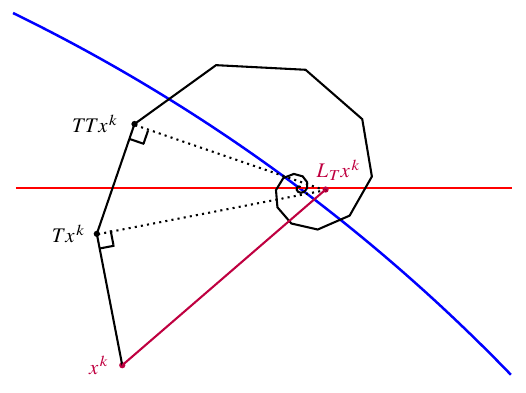}
    \resizebox{0.5\linewidth}{!}{

    }
    \caption{A LT update when applied to a circle and line in $\mathbb{R}^2$.}
    \label{fig:LT_diag}
\end{figure}

A method for computing the Lyapunov surrogate update for two closed sets $A$ and $B$ with an initial point $y^0$ is presented in \cite{calcan2024admm}. Let $v^0 \gets y^n$, and define successive iterates $v^1 \gets T_{A,B}(v^0)$ and $v^2 \gets T_{A,B}(v^1)$. Let $u^1=v^1-v^0$ and $u^2=v^2-v^0$. If the determinant
\begin{align*}
    \eta =\|u^1\|^2\|u^2\|^2-(\langle u^1,u^2\rangle)^2
\end{align*}
is zero, then the points $v^0,\ v^1,\ v^2$ are collinear and the construction is not possible (in which case one proceeds as in the otherwise case in \eqref{LT def} below). Otherwise define $u:= v^0+\mu_1u^1+\mu_2u^2$, where 
\begin{equation*}
   \begin{bmatrix}
        \mu_1 \\
        \mu_2
    \end{bmatrix}
    := \frac{1}{\eta}
    \begin{bmatrix}
    \|u^2\|^2 & -\langle u^1,u^2\rangle \\
    -\langle u^1,u^2\rangle & \|u^1\|^2
    \end{bmatrix}
    \begin{bmatrix}
    \|u^1\|^2 \\
    \|u^2\|^2-\langle u^1,u^2\rangle + \|u^1\|^2.
    \end{bmatrix}
\end{equation*}
The resulting point $u$ is the unique point in the affine subspace spanned by $\{v^0,v^1,v^2\}$ that satisfies the conditions
\begin{equation*}
    \langle u-v^1, v^1-v^0 \rangle = \langle u-v^2, v^2-v^1 \rangle = 0.
\end{equation*}
This construction is illustrated in Figure~\ref{fig:LT_diag} in the case of a circle and a line in $\mathbb{R}^2$ (see Section~\ref{subsec:circle_line} for numerical experiments on this example). We can then define the Lyapunov surrogate method as the sequence $\{y^n\}_{n=1}^\infty$ generated by
\begin{equation}
    y^{n+1}= L_{T_{A,B}}(y^n)\quad \text{where} \quad L_{T_{A,B}}(y^n):= \begin{cases}
        u& \text{if $v^0,v^1, v^2$ are not collinear;}\\
        v^1& \text{otherwise}.
        \end{cases}
    \label{LT def}
\end{equation}

\section{Quadratic Convergence}

In this section, we will prove local quadratic convergence of the Lyapunov surrogate method in the following setting.
\begin{enumerate}[label=\textbf{A\arabic*}, align=left, left=5pt, start=1]
    \item\label{A1} $A := \mathbb{R} \times \{0\}$ and $B := {\rm gra}f := \{(t,s) \mid s=f(t)\}$ where $f$ is real-analytic with $f(0)=0$ and $f'(0) \neq 0$.
\end{enumerate}
Let $y^0 \in \mathbb{R}^2$, and $w^0 := T_{A,B}^2y^0$. Whereas \cite{dao2019lyapunov}[Corollary 4.4] establishes linear convergence of the Douglas--Rachford sequence $(y^n)_n$ to $y^*=0$ in this context (with rate $1/\sqrt{1+|f'(0)|^2}$), we have that 
\begin{equation}\label{eqn:linear_rate}
    (\exists r_0 >0 , \eta >1) \quad \text{such that}\quad  y \in \mathcal{B}_{r_0}(0) \implies d(y^0,0) \geq \eta d(T^2y^0,0).
\end{equation}
We will therefore assume:
\begin{enumerate}[label=\textbf{A\arabic*}, align=left, left=5pt, start=2]
    \item\label{A2} $y^0 \in \mathcal{B}_{r_0}(0)$.
\end{enumerate}
Moreover, \cite[Corollary 3.9]{dao2019lyapunov} shows that for some $r_1>0$, $T_{A,B}$ is locally invertible on $\mathcal{B}_{r_1}(0) \times \mathbb{R}$, wherefore we also take the assumption:
\begin{enumerate}[label=\textbf{A\arabic*}, align=left, left=5pt, start=3]
    \item\label{A3} $y^0 \in \mathcal{B}_{r_1}(0)$.
\end{enumerate}
Combined with \ref{A2}, this guarantees that $w^0 \in \mathcal{B}_r(0)$, wherefore the first coordinate of $w^0$ has absolute value less than $r_1$, wherefore $T_{A,B}y^0 = T_{A,B}^{-1}w^0$.

\begin{lemma}\label{lem:ratio_complete}
    It holds that
    $$
    \underset{y \rightarrow 0}{\liminf}\frac{\|T^2y\|^2}{\|L_{T_{A,B}}y\|_1} >0.
    $$
\end{lemma}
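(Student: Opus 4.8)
The plan is to bound the numerator and the denominator of the ratio separately, each by a constant multiple of $\|y\|^2$ on a punctured neighbourhood of $0$. First I would fix the local model at $0$: since $f$ is real-analytic with $f'(0)\neq0$, the projection $P_B$ is real-analytic near $0$ (apply the analytic implicit function theorem to the stationarity equation $t-z_1+(f(t)-z_2)f'(t)=0$, whose $t$-derivative at $(0,0)$ equals $1+f'(0)^2>0$), so $T:=T_{A,B}$ is real-analytic with $T(0)=0$ and $DP_B(0)$ equal to the orthogonal projection onto the tangent line $T_0B=\operatorname{span}(1,f'(0))$. Hence $M:=DT(0)=\tfrac12(I+R_{T_0B}R_A)=T_{A,T_0B}$ is the Douglas--Rachford operator of the two lines $A$ and $T_0B$, which under $\mathbb{R}^2\cong\mathbb{C}$ is multiplication by $\lambda=e^{i\theta}\cos\theta$ with $\theta=\arctan f'(0)\in(-\tfrac\pi2,\tfrac\pi2)\setminus\{0\}$; in particular
\[
\operatorname{Re}\lambda=|\lambda|^2=\cos^2\theta,\qquad 0<|\lambda|<1,\qquad \operatorname{Im}\lambda=\tfrac12\sin2\theta\neq0.
\]
As $M$ is invertible, $T$ is a local diffeomorphism at $0$ (cf.\ \cite[Corollary~3.9]{dao2019lyapunov}), hence bi-Lipschitz near $0$, so $\|T^2y\|\geq c\|y\|$ there for some $c>0$. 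Since moreover there is $\kappa$ with $\|x\|_1\leq\kappa\|x\|$ for all $x\in\mathbb{R}^2$, it will suffice to produce $C>0$ and a punctured neighbourhood of $0$ on which $\|L_{T_{A,B}}y\|\leq C\|y\|^2$ (at points with $L_{T_{A,B}}y=0$ the ratio is $+\infty$); the lemma then follows with $\liminf\geq c^2/(\kappa C)>0$.

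Next I would write $Ty=My+g(y)$ with $g$ real-analytic, $g(0)=0$, $Dg(0)=0$, so $\|g(y)\|=O(\|y\|^2)$, and put $v^0:=y$, $v^1:=Ty$, $v^2:=T^2y=M^2y+Mg(y)+g(My)+O(\|y\|^3)$. A short computation shows the determinant testing collinearity of $v^0,v^1,v^2$ has leading term $|y|^2|\lambda-1|^2\operatorname{Im}\lambda$, of exact order $\|y\|^2$ and nonzero because $\lambda\neq1$ and $\operatorname{Im}\lambda\neq0$; so near $0$ the three points are not collinear and $L_{T_{A,B}}y=u$, the unique solution of the $2\times2$ system $\langle u,v^1-v^0\rangle=\langle v^1,v^1-v^0\rangle$, $\langle u,v^2-v^1\rangle=\langle v^2,v^2-v^1\rangle$, say $Nu=b$ with $N$ the matrix whose rows are $v^1-v^0$ and $v^2-v^1$. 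The same expansions give that $N$ has entries $O(\|y\|)$ and that $|\det N|=|y|^2|\lambda-1|^2|\operatorname{Im}\lambda|+O(\|y\|^3)$ is of exact order $\|y\|^2$, whence by Cramer's rule $\|N^{-1}\|=O(1/\|y\|)$ and $\|u\|\leq\|N^{-1}\|\,\|b\|$. Thus $\|L_{T_{A,B}}y\|=O(\|y\|^2)$ will follow once the two coordinates of $b$ --- which are a priori only $O(\|y\|^2)$ --- are shown to be $O(\|y\|^3)$.

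That order-$\|y\|^3$ bound is the crux, and is where the relationship with the known Lyapunov functions of \cite{benoist2015sphere,dao2019lyapunov} is used: for the linearised dynamics $z\mapsto\lambda z$ each Douglas--Rachford step is a chord of a circular level set of the Lyapunov function tangent to that circle at its far endpoint, so the surrogate lands exactly on the fixed point $0$; analytically this is the spectral relation $\operatorname{Re}\lambda=|\lambda|^2$, which forces
\[
\langle My,(M-I)y\rangle=|y|^2\bigl(|\lambda|^2-\operatorname{Re}\lambda\bigr)=0,\qquad \langle M^2y,M(M-I)y\rangle=|\lambda|^2|y|^2\bigl(|\lambda|^2-\operatorname{Re}\lambda\bigr)=0.
\]
Substituting $v^1=My+g(y)$ and $v^1-v^0=(M-I)y+g(y)$, the leading bilinear term $\langle My,(M-I)y\rangle$ cancels and $\langle v^1,v^1-v^0\rangle=\langle(2M-I)y,g(y)\rangle+O(\|y\|^4)=O(\|y\|^3)$; using $g(My+g(y))=g(My)+O(\|y\|^3)$, the analogous cancellation in the second coordinate yields $\langle v^2,v^2-v^1\rangle=O(\|y\|^3)$. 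Hence $\|b\|=O(\|y\|^3)$, so $\|L_{T_{A,B}}y\|=\|u\|=O(\|y\|^2)$, and together with the reduction above this proves the lemma.

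The step I expect to be the main obstacle is precisely this cancellation: without it one obtains only $\|b\|=O(\|y\|^2)$, hence $\|L_{T_{A,B}}y\|=O(\|y\|)$ and no acceleration over Douglas--Rachford; securing it rests on identifying $DT_{A,B}(0)$ with the two-line Douglas--Rachford operator and reading off $\operatorname{Re}\lambda=|\lambda|^2$ --- the very feature of the Lyapunov functions in \cite{benoist2015sphere,dao2019lyapunov} that makes the surrogate asymptotically exact in this setting. A smaller point that must be handled with care is the non-collinearity of $v^0,v^1,v^2$ for $y$ near $0$, so as to stay on the $u$-branch of \eqref{LT def}; everything else --- the Taylor expansions of $T$ and $T^2$, Cramer's rule, the equivalence of norms on $\mathbb{R}^2$, and the bi-Lipschitz bound for $T^2$ --- is routine.
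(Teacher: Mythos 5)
Your proposal is correct, and it takes a genuinely different route from the paper's. The paper never linearises $T$: it invokes the characterisation of $L_T$ from \cite[Theorem~4.1]{lindstrom2022computable}, writing $L_Ty = w-\gamma_1\nabla V(w)=T^{-1}w-\gamma_2\nabla V(T^{-1}w)$ with the explicit Lyapunov gradient $\nabla V(x,z)=(f(x)/f'(x),z)$ and the closed form of $T^{-1}$ from \cite{dao2019lyapunov}, solves the resulting $2\times 2$ system for the coefficient $h(x,z)$, and then Taylor-expands $f$ in the polar coordinates of $w=T^2y$ to show that both coordinates of $L_Ty$ are $O(\|w\|^2)$ (Lemmas~\ref{lem:zeta}--\ref{lem:LT}, culminating in the bound on $\|L_Ty\|_1/R^2$). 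You instead work directly from the defining orthogonality conditions $\langle u-v^1,v^1-v^0\rangle=\langle u-v^2,v^2-v^1\rangle=0$, identify $DT(0)$ with the two-line Douglas--Rachford operator, i.e.\ complex multiplication by $\lambda=e^{i\theta}\cos\theta$, and let the spectral identity $\operatorname{Re}\lambda=|\lambda|^2$ annihilate the leading bilinear terms of the right-hand side; Cramer's rule with $|\det N|\asymp\|y\|^2$ then gives $\|L_Ty\|=O(\|y\|^2)$, and local bi-Lipschitzness of $T^2$ converts this into the stated liminf. I checked your key identities ($\langle My,(M-I)y\rangle=|y|^2(|\lambda|^2-\operatorname{Re}\lambda)=0$, its $M^2$ analogue, and the leading term $|y|^2|\lambda-1|^2\operatorname{Im}\lambda\neq 0$ of the collinearity determinant), and they hold; the non-collinearity issue is also handled, just as the paper handles it via the nonvanishing denominator in \eqref{eqn:large_rational_term}. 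What each approach buys: yours is more geometric, avoids the rational-function bookkeeping of the appendix, and isolates the exact mechanism of acceleration (the circle geometry of the linearised spiral, which is where the Lyapunov structure enters for you); the paper's computation instead produces explicit closed forms for $h(x,z)$ and for the coordinates of $L_Ty$ in \eqref{LTX}--\eqref{LTZ}, tying the result concretely to the known Lyapunov functions and making constants extractable. In a full write-up you should justify that $P_B$, hence $T$, is smooth near $0$ with a remainder $g(y)=O(\|y\|^2)$ uniformly (real-analyticity supplies this), and note that $\operatorname{Im}\lambda\neq 0$ and $\lambda\notin\{0,1\}$ use precisely the non-tangentiality hypothesis $0<|f'(0)|<\infty$ of \ref{A1}.
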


\begin{proof}
    Appendix~A is devoted to the proof of this lemma.
\end{proof}


\begin{theorem}\label{thm:quadratic}
    Let the sets \(A,B\) be as in assumption~\ref{A1}, and \(r_0,r_1\) be as in~\ref{A2},~\ref{A3}. Let $y^{n+1}:=L_{T_{A,B}}y^n$. Then \(\exists r_2,r_3>0\) such that if \(y^0 \in \mathcal{B}_{\min \{r_0,r_1,r_2,r_3\}}(0)\), then \((y^n)_n\) converges to $0$ with rate that is never worse than 1/2-linear and ultimately quadratic. 
\end{theorem}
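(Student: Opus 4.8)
The plan is to combine the two ingredients that the excerpt has carefully set up: the linear contraction estimate~\eqref{eqn:linear_rate} for the underlying Douglas--Rachford operator $T := T_{A,B}$, and the ratio bound of Lemma~\ref{lem:ratio_complete}, which controls the size of the Lyapunov surrogate step $L_{T_{A,B}}y$ in terms of $\|T^2y\|^2$. First I would observe that, since $L_{T_{A,B}}y$ is by construction the point in the affine hull of $\{y, Ty, T^2y\}$ satisfying the orthogonality conditions $\langle u - Ty, Ty-y\rangle = \langle u - T^2y, T^2y - Ty\rangle = 0$, standard estimates for this construction give $\|L_{T_{A,B}}y\| \le \|T^2 y\|$ (the surrogate iterate is no farther from $0$ than the third DR iterate, because $0$ lies ``inside'' the spiral); together with \eqref{eqn:linear_rate} this already yields, for $y^0$ small enough, $\|L_{T_{A,B}}y^0\| \le \|T^2 y^0\| \le \eta^{-1}\|y^0\| \le \tfrac12 \|y^0\|$ after possibly shrinking the radius so that $\eta \ge 2$ there (or, if $\eta$ cannot be taken $\ge 2$, iterating $T^2$ a bounded number of times inside the definition is not available — instead I would simply record the $\eta^{-1}$-linear bound and note $\eta^{-1}<1$, which already secures the ``never worse than'' claim once we confirm the whole sequence stays in the ball). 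This gives the first half of the theorem: the iteration map $L_{T_{A,B}}$ is a contraction on a small enough ball $\mathcal{B}_{\rho}(0)$, hence $(y^n)_n$ is well-defined, stays in the ball, and converges to $0$ at a rate no worse than linear with ratio $\le 1/2$ (choosing $r_2$ to enforce $\eta\ge 2$, or otherwise absorbing the constant).

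Next, for the quadratic rate, I would feed the linear convergence back into Lemma~\ref{lem:ratio_complete}. The lemma says there exist $r_3>0$ and $c>0$ such that $\|L_{T_{A,B}}y\|_1 \le c^{-1}\|T^2 y\|^2$ for all $y \in \mathcal{B}_{r_3}(0)$ (using $\liminf > 0$ to extract a uniform positive lower bound on a punctured ball, and handling $y=0$ trivially since both sides vanish). Combined with \eqref{eqn:linear_rate}, which gives $\|T^2 y\| \le \eta^{-1}\|y\|$, and the norm equivalence $\|\cdot\|_1 \asymp \|\cdot\| = \|\cdot\|_2$ on $\mathbb{R}^2$, we obtain a constant $K>0$ with
\begin{equation*}
    \|y^{n+1}\| = \|L_{T_{A,B}}y^n\| \le K \|T^2 y^n\|^2 \le K \eta^{-2}\|y^n\|^2
\end{equation*}
for every $n$ with $y^n \in \mathcal{B}_{\min\{r_0,r_1,r_3\}}(0)$. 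Since we have already arranged, in the first part, that all iterates remain in that ball (shrinking the initial radius once more if necessary so that $K\eta^{-2}\|y^0\| < 1$, which guarantees $\|y^{n+1}\| \le \|y^n\|$ and hence that the iterates never escape), this is precisely $r$-quadratic convergence to $0$.

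The assembly of the final radius is then bookkeeping: set $r_2$ to be whatever threshold makes the contraction factor in the first part at most $1/2$ (e.g. the sub-ball of $\mathcal{B}_{r_0}(0)$ on which $\eta \ge 2$, which exists because $\eta$ there can be taken as close to the local DR rate reciprocal $\sqrt{1+|f'(0)|^2}>1$ as we like by shrinking — if $\sqrt{1+|f'(0)|^2} < 2$ one instead just keeps the honest factor $\eta^{-1}<1$ and drops the ``$1/2$'' to ``some $\gamma<1$'', or notes the quadratic phase kicks in immediately and dominates), and set $r_3$ from Lemma~\ref{lem:ratio_complete}; then further shrink so that $K\eta^{-2}\cdot(\text{radius}) < 1$ for forward invariance. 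I expect the genuine mathematical content to be entirely inside Lemma~\ref{lem:ratio_complete} (deferred to Appendix~A), so the main obstacle here is purely expository: carefully justifying that the orthogonality-defined surrogate point $L_{T_{A,B}}y$ satisfies $\|L_{T_{A,B}}y\|\lesssim\|T^2y\|$ — i.e. that the circumcenter-like construction does not overshoot away from the fixed point $0$ — which uses the spiralling geometry of the DR iterates guaranteed by the cited results of \cite{dao2019lyapunov}, and then threading the three radius constraints together so that forward invariance of the ball is actually established (not merely assumed) before invoking the one-step contraction and one-step quadratic estimates inductively.
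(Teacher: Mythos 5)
Your second paragraph is essentially the paper's proof: extract a uniform constant $M>0$ from the $\liminf$ in Lemma~\ref{lem:ratio_complete} on a small ball, combine it with \eqref{eqn:linear_rate} to get $d(y^{n+1},0) \le \tfrac{2}{M\eta^2}\, d(y^n,0)^2$, and shrink the initial radius so the iterates cannot escape; the quadratic rate is then immediate. The gap is in your first paragraph, where the ``never worse than $1/2$-linear'' claim is made to rest on two assertions that are not available: (a) the inequality $\|L_{T_{A,B}}y\| \le \|T^2y\|$ is not a standard estimate for the orthogonality-defined point $u$ and is nowhere proved (you yourself flag it as the main obstacle), and (b) $\eta \ge 2$ cannot in general be arranged by shrinking the ball, because the best $\eta$ in \eqref{eqn:linear_rate} is governed by the asymptotic two-step Douglas--Rachford contraction factor $1/(1+|f'(0)|^2)$; when $|f'(0)|<1$ this factor exceeds $1/2$, so no neighbourhood yields $\eta\ge 2$, and your fallback of weakening ``$1/2$'' to ``some $\gamma<1$'' would prove a weaker statement than the theorem.

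Neither assertion is needed. The paper obtains the $1/2$ factor, and forward invariance of the ball in the same stroke, purely from the quadratic estimate by a choice of radius: with $d(y^{n+1},0) < \tfrac{2}{M\eta^2} d(y^n,0)^2$ on $\mathcal{B}_{r_2}(0)$, take $r_3 = \min\{r_2, M\eta^2/4\}$, so that $d(y^n,0)^2 \le \tfrac{M\eta^2}{4} d(y^n,0)$ and hence $d(y^{n+1},0) \le \tfrac12 d(y^n,0)$ whenever $y^n \in \mathcal{B}_{r_3}(0)$; induction then keeps every iterate in $\mathcal{B}_{\min\{r_0,r_1,r_2,r_3\}}(0)$ and delivers both the $1/2$-linear bound and, via the same displayed inequality, the eventual quadratic rate. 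Your own bookkeeping step ``shrink so that $K\eta^{-2}\|y^0\|<1$'' is exactly this move with threshold $1$ in place of $1/2$; tightening it to $1/2$ turns your second paragraph into a complete proof and lets you delete the first paragraph, including the unproved non-overshoot claim, altogether.
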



\begin{proof}
    For any $y^n$, denote $w^n := T^2y^n$. Recalling \eqref{eqn:linear_rate}, it holds that
	\begin{align}
	(y^n \in \mathcal{B}_{r_0}(0)) \quad \implies \quad d(y^n,0) \geq \eta d(w^n,0)
	\end{align}
	Now, from~Lemma~\ref{lem:ratio_complete}, we have that there exists $r_2 > 0$ such that 
	\begin{align}
	y^n \in \mathcal{B}_{r_2}(0) &\implies  \frac{d(w^n,0)^2}{d(y^{n+1},0)}> \frac{M}2\nonumber \\
	&\iff d(w^n,0)^2\left( \frac{2}{M}\right)> d(y^{n+1},0)\nonumber \\
	&\implies d(y^n,0)^2 \frac{2}{M\eta^2} > d(y^{n+1},0).\label{eqn:quadratic_rate}
	\end{align}
    Choose $r_3 = \min\{r_2, M\eta^2/4 \}$, and we have that 
	\begin{align*}
		y^n \in \mathcal{B}_{r_3}(0) &\implies \frac{M\eta^2}{4}d(y^n,0) \geq d(y^n,0)^2 \geq \frac{M\eta^2}{2}d(y^{n+1},0)\\
		&\implies \frac{1}{2}d(y^n,0) \geq d(y^{n+1},0).
	\end{align*}
Hence, letting $y^0 \in \mathcal{B}_{\min \{r_0,r_1,r_2,r_3 \} }$, we have that $(y^n)_{n}$ converges to zero with rate that is never worse than 1/2-linear and eventually quadratic by  \eqref{eqn:quadratic_rate}.
\end{proof}

\noindent From now on, we alias $L_T:=L_{T_{A,B}}$, which is certainly shorter.

\section{Numerical Results}
\label{sec:results}
In this section we present numerical experiments illustrating the local convergence behaviour of the Douglas--Rachford method and the Lyapunov surrogate method for two plane curve examples. We report two complementary performance measures: 
\begin{enumerate}[label=(\roman*)]
    \item Dolan--Moore performance profiles \cite{dolan2002benchmarking} across 1000 trials; and
    \item convergence profiles in terms of log--scale error $\log\|x^k-x^*\|$, where $x^*$ is the fixed point.
\end{enumerate}
Since one $L_T$ step requires twice the operator evaluations of a DR step, we report both iteration counts and CPU times. All experiments were implemented in \texttt{Python}, using the \texttt{mpmath} library to perform arbitrary--precision arithmetic. This high precision is necessary to reliably observe convergence behaviour. Implementation for all our results can be seen at \url{https://github.com/Jordancollard/quadratic-convergence-projection-plane-curve}.


\subsection{Circle and Line}
\label{subsec:circle_line}
We consider the problem of finding a point in the intersection of the line $L=\{x\in \mathbb{R}^2:x_2=0.5\}$ and the unit circle $\mathbb{S}^1=\{x\in \mathbb{R}^2:\|x\|=1\}$, a simple nonconvex feasibility problem that has been extensively studied because of its geometric similarity to phase retrieval \cite{aragon2013global,benoist2015sphere,borwein2018dynamics,borwein2011douglas}. The Douglas--Rachford operator used here is defined as $T_{L,\mathbb{S}^1}:=\frac{1}{2}(I+R_{\mathbb{S}^1}R_L)$. The projection onto L is given by
\begin{align*}
    P_L(x_1,x_2)=(x_1,0.5),
\end{align*}
and the projection onto the unit circle is 
\begin{align*}
P_{\mathbb{S}^1}(x) =
\begin{cases}
\dfrac{x}{\|x\|}, & \text{if } x \neq 0, \\
(1, 0), & \text{if } x = 0,
\end{cases}
\end{align*}
where we select the point $(1,0)$ since the projection is set valued at $x=0$. Initial points are generated within a radius of 0.5 around the intersection point $(\frac{\sqrt{3}}{2},0.5)$ to assess local convergence behaviour.

\begin{figure}[tb]
\centering
\begin{minipage}[t]{0.48\textwidth}
    \centering
    \includegraphics[width=\linewidth]{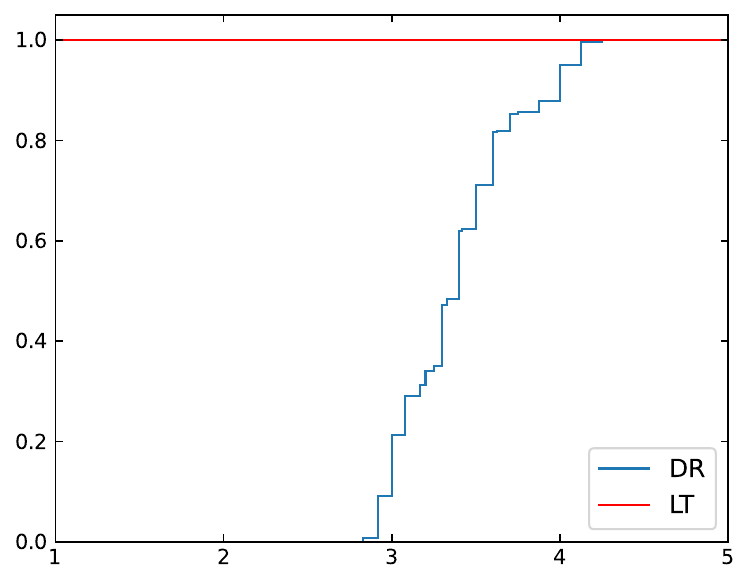}
    
\end{minipage}
\hfill
\begin{minipage}[t]{0.48\textwidth}
    \centering
    \includegraphics[width=\linewidth]{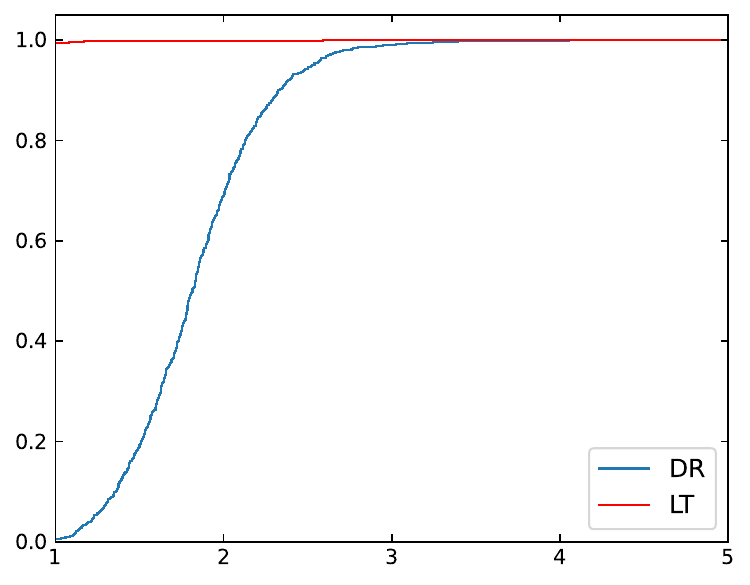}
    
\end{minipage}
\caption{Performance profiles comparing the Douglas--Rachford method and Lyapunov surrogate method for a circle and a line in $\mathbb{R}^2$.
\textbf{a} Based on number of iterates.
\textbf{b} Based on CPU time.
}
\label{fig:performance_profiles_circle_line}
\end{figure}

\begin{figure}[tb]
\includegraphics[scale=.6]{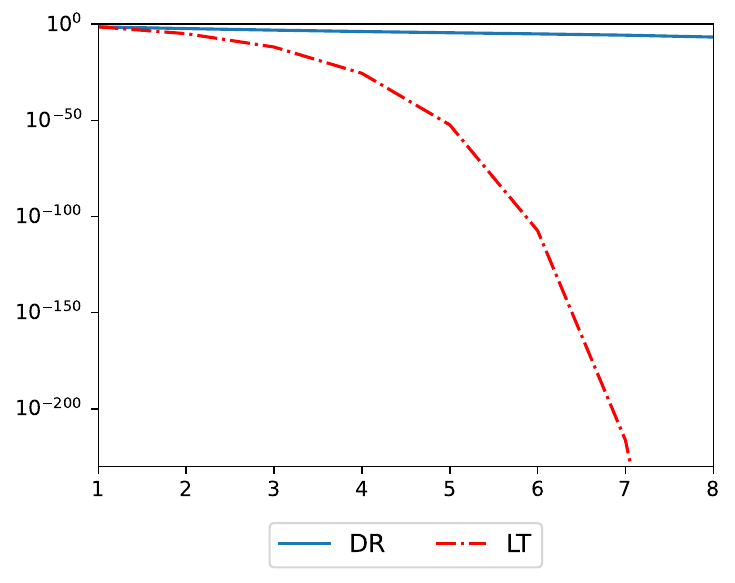}
\caption{Observed convergence rates of the Douglas--Rachford method and Lyapunov surrogate method for a circle and a line in $\mathbb{R}^2$.}
\label{fig:convergence plot_circle_line}       
\end{figure}
Figure~\ref{fig:performance_profiles_circle_line} demonstrates that LT consistently outperforms DR for this problem. In Figure~\ref{fig:convergence plot_circle_line} we see log-scale plot of the error, with DR converging linearly, and LT exhibiting quadratic convergence, in agreement with Theorem~\ref{thm:quadratic}.

\subsection{Semidefinite Feasibility}
\label{subsec:semidefinite_feasibility}
\begin{figure}[tb]
\includegraphics[scale=.6]{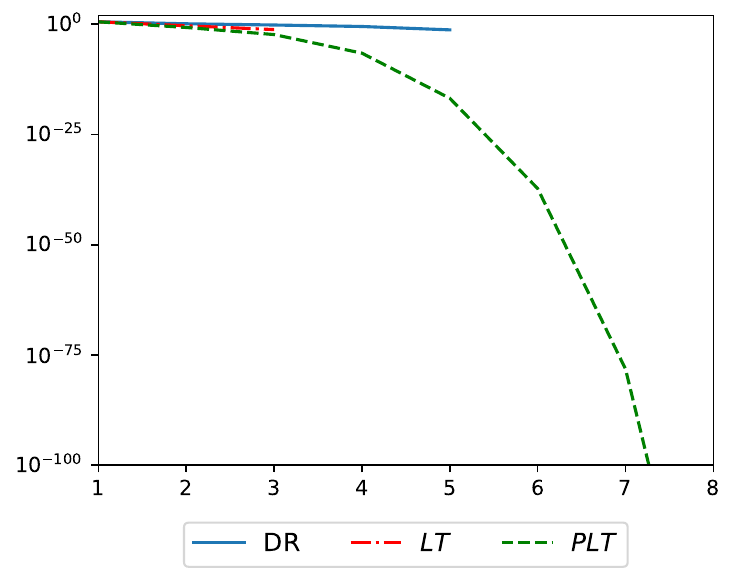}
\caption{Observed convergence behaviour of DR, LT, and PLT for Setting 1.}
\label{fig:convergence plot_psd_s1}       
\end{figure}
We next consider feasibility problems in the space of real symmetric matrices
\begin{equation*}
    S^n:=\{X\in \mathbb{R}^{n\times n}:X=X^\top\},
\end{equation*}
which arise in semidefinite relaxations of combinatorial optimisation problems, such as the Max-Cut problem \cite{BenTalNemirovski2001, goemans1995improved}. These problems can be seen as higher dimensional analogues of plane curve problems, now involving the nonlinear constraint of positive semidefiniteness and affine constraints on matrix entries. In all cases, we take $A$ to denote the nonlinear set and $B$ to denote the affine constraint set, so that the Douglas--Rachford operator is given by $T_{B,A}:=\tfrac{1}{2}(I+R_AR_B)$. Initial points are generated as symmetric matrices with entries sampled uniformly from $[-1,1]$ and then symmetrised. 

In this section, we also include the \emph{projected} Lyapunov surrogate method (PLT), defined by 
\begin{equation*}
    y^{k+1}=L_T\! \big(P_B(y^k)\big),
\end{equation*}
that is, applying $L_T$ after a projection onto the affine set. Initially introduced as a stabilisation technique for numerical error \cite{dizon2022circumcentering}, PLT defines a distinct algorithm with its own convergence behaviour, which we highlight in the experiments below.

\textbf{Setting 1}: We begin with the problem of finding a point in the intersection of the positive semidefinite (PSD) cone
\begin{equation*}
        S^n_+:=\{X\in S^n:v^\top Xv \geq 0 \text{ for all }  v\in \mathbb{R}^n\} 
\end{equation*}
and the affine subspace
\[
    S_1:=\{X\in S^n:\operatorname{diag} (X)=1\}.
\]
The projections onto these sets are
\begin{align}
    P_{S^n_+}(X) &= Q \,\operatorname{diag}(\max\{\lambda_i,0\})\, Q^\top, \label{eq:psd-proj} \\
    (P_{S_1}(X))_{ij} &=
    \begin{cases}
        \tfrac{1}{2}(X_{ij}+X_{ji}), & i \neq j, \\
        1, & i=j,
    \end{cases} \label{eq:s1-proj}
\end{align}
where $X = Q \operatorname{diag}(\lambda_1,\dots,\lambda_n) Q^\top$ is the spectral decomposition. The formula for \eqref{eq:psd-proj} is the standard eigenvalue-thresholding projection onto $S^n_+$ (see \cite{bauschke2011convex}[Example~28.25]); we provide a short, self-contained proof of \eqref{eq:s1-proj} below.
\begin{proof}
    We seek $P_{S_1}(X)=\arg\min_{Y\in S_1}\|Y-X\|^2_F.$
    Rewriting the objective
    \[
        \|Y-X\|^2_F=\sum_{i=1}^n(Y_{ii}-X_{ii})^2+\sum_{i<j}[(Y_{ij}-X_{ij})^2+(Y_{ji}-X_{ji})^2].
    \]
    Subject to $Y\in S_1$, the constraints are:
    \begin{enumerate}[label=(\roman*)]
        \item $Y_{ii}=1$ for all $i;$
        \item $Y_{ij}=Y_{ji}$ for all $i\neq j.$
    \end{enumerate}
    So for the diagonal entries we have $Y_{ii}=1$ fixed, so we don't need to optimise.
    For each off-diagonal pair $(i,j)$ with each $i<j$, let $y:=Y_{ij}=Y_{ji}$ and minimise
    \[
        \phi(y)=(y-X_{ij})^2+(y-X_{ji})^2.
    \]
    This is strictly convex with derivative
    \[
        \phi'(y)=2(y-X_{ij})+2(y-X_{ji})=4y-2(X_{ij}+X_{ji}),
    \]
    which when we set to zero and solve for y we get
    \[
        y=\frac{X_{ij}+X_{ji}}{2}.
    \]
    Putting these together gives 
    \[
        (P_{S_1}(X))_{ij} =
    \begin{cases}
        \tfrac{1}{2}(X_{ij}+X_{ji}), & i \neq j, \\
        1, & i=j,
    \end{cases}
    \]
    as required.
\end{proof}

Figure~\ref{fig:convergence plot_psd_s1} indicates that, in this setting, both DR and LT appear to terminate after finitely many steps, although we have no theoretical explanation for why finite convergence should occur here. In contrast, PLT does not terminate finitely and instead shows behaviour consistent with quadratic convergence, illuminating a distinction between LT and its projected variant. This clearly warrants further investigation.

\textbf{Setting 2}:
We next consider the intersection of the boundary of the PSD cone
\begin{equation*}
    \partial S^n_+:=\{X \in S^n_+ : \lambda_{\min}(X) = 0\}
\end{equation*}
with the affine subspace $S_1$. When we replace the PSD cone with its boundary, Slater's condition no longer holds, and we do not expect finite convergence to persist. The projection onto $\partial S^n_+$ is given by
\begin{align*}
    P_{\partial S^n_+}(X) =
    \begin{cases}
    P_{S^n_+}(X), & \lambda_{\min} \leq 0, \\[0.6ex]
    Q\,\mathrm{diag}(\mu)\,Q^\top, & \lambda_{\min} > 0.
\end{cases}
\end{align*}
where \(\mu\) is obtained from $(\lambda_1,\dots,\lambda_n)$ by replacing one smallest eigenvalue with $0$ and leaving all other eigenvalues unchanged. 
\begin{proof}
    Let $X\in S(n)$ with spectral decomposition 
    \[
        X=Q\operatorname{diag}(\lambda_1,\dots,\lambda_n)Q^\top,\quad \lambda_1\leq\dots\leq\lambda_n.
    \]
    Consider the nearest point problem on the boundary
    \begin{equation*}
        \min_{Y} \tfrac{1}{2}\|Y-X\|^2_F\quad \text{s.t. } Y\succeq0,\  \lambda_{\min}(Y)=0.
    \end{equation*}
    The Frobenius norm is unitarily invariant \cite{Horn_Johnson_2012}[Chapter~5.6] and $S^n_+$ is orthogonally invariant \cite{Horn_Johnson_2012}[Chapter~4.5]; therefore any nearest point in $S^n_+$ can be taken to share an orthonormal eigenbasis with $X$. Write
    \[
        Y=Q\operatorname{diag}(\mu_1,\dots,\mu_n)Q^\top\quad \text{with}\quad \mu_i\geq 0, \min_i\mu_i=0.
    \]
    Then
    \[
        \|Y-X\|^2_F=\|Q\operatorname{diag}(\mu_i-\lambda_i)Q^\top\|^2_F=\sum_{i=1}^n(\mu_i-\lambda_i)^2,
    \]
    where each $\mu_i$ can be optimised independently.
    \begin{enumerate}[label=(\roman*)]
    \item If $\min_i\lambda_i\leq0$, we have $\mu_i=\max\{\lambda_i,0\}$ minimises each term and satisfies $\min_i\mu_i=0.$ This is the usual PSD cone projection.
    \item If $\min_i\lambda_i>0$, the constraint forces at least one $\mu_i=0.$ The minimiser is then to set $\mu_i=\lambda_i$ for all but one index and sets $\mu_j=0$ for some $j$. Choosing $j\in \arg\min_i\lambda_i$ minimises the increase $\sum_i(\mu_i-\lambda_i)^2$. 
    \end{enumerate}
    Notice the projector onto \(\partial S^n_+\) may be set-valued when the smallest eigenvalue has multiplicity greater than one. In this case we fix a selection, denoted \(\Pi_{\partial}\), by zeroing the first index attaining \(\lambda_{\min}(X)\).
\end{proof}

\begin{figure}[tb]
\includegraphics[scale=.6]{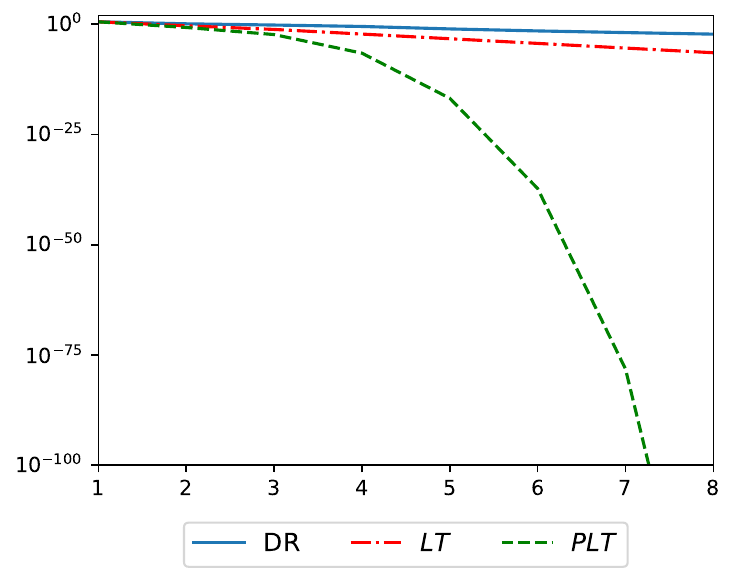}
\caption{Observed convergence behaviour of DR, LT, and PLT for Setting 2.}
\label{fig:convergence plot_psdb_s1}       
\end{figure}
As shown in Figure~\ref{fig:convergence plot_psdb_s1}, DR and LT no longer terminate finitely and instead appear to converge linearly, while PLT demonstrates convergence consistent with a quadratic rate.

\textbf{Setting 3}:
Lastly, we consider the intersection of the PSD boundary with the hyperplane
\[
    S_{11}:=\{X\in S^n:X_{11}=1\}.
\]
This case is more analogous to the setting in Theorem~\ref{thm:quadratic}, since $S_{11}$ is a hyperplane, the prototypical affine set in our framework. The projection onto $S_{11}$ is
\begin{equation}
     (P_{S_{11}}(X))_{ij} =
  \begin{cases}
    \tfrac{1}{2}(X_{ij}+X_{ji}), & i \neq j, \\
    1, & i=j=1, \\
    X_{ij}, & \text{otherwise}.
  \end{cases} \label{eq:s11-proj}
\end{equation}
The proof follows from the argument for \eqref{eq:s1-proj} with the only change being that only the $(1,1)$ entry is fixed.
\begin{figure}[tb]
\includegraphics[scale=.6]{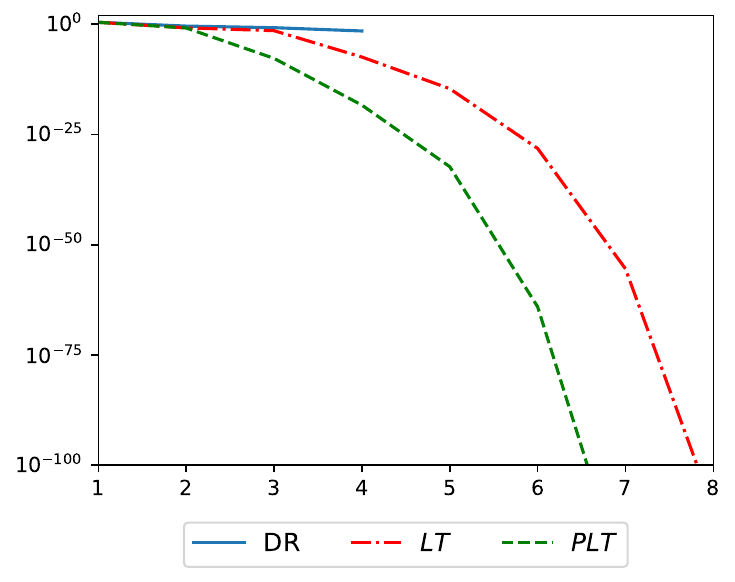}
\caption{Observed convergence behaviour of DR, LT, and PLT for Setting 3.}
\label{fig:convergence plot_psdb_s11}       
\end{figure}

As shown in Figure~\ref{fig:convergence plot_psdb_s11}, DR appears to converge finitely in this case, although we again we have no theoretical explanation for why finite convergence should occur here. In comparison, both LT and PLT exhibit behaviour consistent with quadratic convergence.

\section{Conclusion}
\label{sec:conclusion}
We established a theoretical guarantee of local quadratic convergence for the Lyapunov surrogate method when applied to the intersection of a smooth curve and a line in \(\mathbb{R}^2\) at a point where they intersect non-tangentially. This result provides the first analytical evidence supporting previously observed numerical behaviour. Complementing this, we presented numerical experiments on semidefinite feasibility problems, which serve as higher-dimensional analogues of plane curve intersections, showing the potential for quadratic convergence of Lyapunov surrogate methods beyond two dimensions. Future work should extend this analysis to higher-dimensional settings. Since this first result makes use of a characterisation \cite[Theorem~4.1]{lindstrom2022computable} that does not readily extend from 2 to $n$ dimensions, we expect such an extension to be non-trivial. Another direction is to better understand the role of projected LT and its impact on convergence rates in nonconvex feasibility.

\appendix

\section*{Appendix A: Characterisation of the $L_T$ Operator and Supporting Lemmas for Theorem~\ref{thm:quadratic}}
\label{sec:appendix}

Letting $w=T^2y$ with $y \in \mathcal{B}_{\min \{r_0,r_1 \}}(0)$, we will next describe the  Lyapunov surrogate minimising update $L_T(y)$. As the coordinates of $w$, in particular, will serve as the basis for our proof, we let $w=(x,z)$. Now \cite[Corollary 3.9]{dao2019lyapunov} furnishes a local expression for the inverse
\begin{align}\label{eqn:TinverseA}
	T_{A,B}^{-1}w &= \big(x + z f'(x), z - f(x)  \big).
\end{align}
Now \cite[Section~5]{dao2019lyapunov} furnishes a closed form strict-descent Lyapunov function $V$ whose gradient is as follows: $$
\nabla V(x,z) = \left(\frac{f(x)}{f'(x)},z \right).
$$ 
We also have from \cite[Theorem~4.1]{lindstrom2022computable} that for some $\gamma_1,\gamma_2 \in \mathbb{R}$, the $L_Ty$ update can be expressed\footnote{Note that this construction, corresponding to the non-collinear case in \eqref{LT def}, is defined whenever the denominator of the large rational term in \eqref{eqn:large_rational_term} is nonzero, and Lemma~\ref{lem:bigratio} shows that this is always the case locally near $0$, wherefore the collinear case in \eqref{LT def} cannot occur for $y$ sufficiently near to zero and may be ignored.} in two equivalent ways:
\begin{align*}
	L_T y &= w - \gamma_1 \nabla V(w) = (x,z)-\gamma_1 \left(\frac{f(x)}{f'(x)},z \right)\\
	\text{and}\quad L_T y &= T^{-1}w - \gamma_2 \nabla V(T^{-1}(w)).
\end{align*}
Substituting the explicit form of \(T^{-1}\) from \eqref{eqn:TinverseA} into the second expression yields
\begin{align*}
    L_Ty=\big(x+zf'(x),z-f(x)\big)-\gamma_2\bigg(\frac{f(x+zf'(x))}{f'(x+zf'(x))},z-f(x)\bigg).
\end{align*}
Thus we obtain two simultaneous expressions for $L_Ty$, leading to two equations in the two unknowns $\gamma_1,\gamma_2$. Collecting terms gives the linear system
\begin{align*}
	\left[\begin{array}{cc}
		-\frac{f(x)}{f'(x)} & \frac{f(x+zf'(x))}{f'(x+zf'(x))}\\
		-z & z-f(x)
		\end{array} \right]%
	\left[\begin{array}{c}
		\gamma_1\\ \gamma_2
	\end{array} \right] =%
	\left[\begin{array}{c}
		zf'(x)\\ -f(x)
	\end{array} \right].
\end{align*}
Inverting the left matrix yields explicit formulas for the coefficients
\begin{align}\label{eqn:large_rational_term}
	\left[\begin{array}{c}
		\gamma_1\\ \gamma_2
	\end{array} \right] &=%
	\frac{1}{-\frac{(z-f(x))f(x)}{f'(x)} + \frac{zf(x+zf'(x))}{f'(x+zf'(x))}}%
	\left[\begin{array}{c c}
		z-f(x) & -\frac{f(x+zf'(x))}{f'(x+zf'(x))} \\
		z & -\frac{f(x)}{f'(x)}
	\end{array} \right]%
	\left[\begin{array}{c}
		zf'(x) \\ -f(x)
	\end{array} \right].
\end{align}
In particular, notice that we can express $L_Ty$ directly in terms of the entries of $w=(x,z)$:
\begin{subequations}
\begin{align}
	\gamma_1 &= h(x,z) := \frac{ (z-f(x))zf'(x) + \frac{f(x)f(x+zf'(x))}{f'(x+zf'(x))}}{-\frac{f(x)(z-f(x))}{f'(x)} + \frac{zf(x+zf'(x))}{f'(x+zf'(x))}}\\
	(L_Ty)_1 &= x - h(x,z) \frac{f(x)}{f'(x)} \label{LTX}\\
	(L_Ty)_2 &= z- h(x,z)z \label{LTZ}
\end{align}
\end{subequations}
We now prepare a local Taylor expansion for $f$ and $f'$ about $t=0$, which will be used in the subsequent lemmas. Denote $f'(0) =a \neq 0$. Then $f$ and $f'$ admit the expansions
\begin{subequations}\label{eqn:Taylorseries}
\begin{align*}
	f(t) &= 0 + at + t^2b(t)\\
	f'(t) & = a + tc(t)
\end{align*}
\end{subequations}
Here $t^2b(t)$ is the tail of the series for $f(t)$, consisting of the degree 2 term and all following terms. Here $b(t)$ is the tail divided by $t^2$. Here $tc(t)$ is the tail of the series for $f'(t)$ consisting of the degree 1 term and all following terms, and $c(t)$ is the tail divided by $t$.

In order to prove Theorem~\ref{thm:quadratic}, it remains to establish the estimate
\begin{equation}\label{eqn:WTS}
    \liminf_{y \rightarrow 0} \frac{d(w,0)^2}{d(L_Ty,0)} >0.
\end{equation}
For the denominator, we will use the 1-norm distance,
$$
d(L_Ty,0) = |(L_Ty)_1| + |(L_Ty)_2|
$$
where $(L_Ty)_1,(L_Ty)_2$ have the closed form expressions in \eqref{LTX} and \eqref{LTZ}. We will also interchangeably use the Euclidean and polar coordinates of $w$, namely $(x,z)=(R\cos(\theta),R\sin(\theta)).$

\begin{lemma}\label{lem:zeta}
	It holds that
	\begin{align*}
		\underset{R\rightarrow 0}{\limsup}&\left(\frac{f(x+zf'(x))}{f'(x+zf'(x))}-zf'(x) - \frac{f(x)}{f'(x)}\right)/R^2\\
		&\leq \frac{\bar{\nu}(\theta)}{a^2}
	\end{align*}
	where the number $\bar{\nu}(\theta)$ is finite.
\end{lemma}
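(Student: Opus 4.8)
The plan is to recognise the bracketed quantity as a second-order Taylor remainder for the analytic map $g := f/f'$. First I would observe that, since $f$ and $f'$ are real-analytic and $f'(0) = a \neq 0$, the quotient $g$ is real-analytic on some neighbourhood $N$ of $0$, with $g(0) = 0$ and (from the expansions \eqref{eqn:Taylorseries}) $g'(0) = 1$; in particular $g'$ and $g''$ are bounded on $N$. Writing $u := x + zf'(x)$, the expression inside the $\limsup$ is exactly $g(u) - zf'(x) - g(x)$. Since $|u| \le |x| + |z|\,|f'(x)| \le R\big(1 + \sup_N |f'|\big)$, we have $u \to 0$ as $R \to 0$, so for $R$ small enough both $x$ and $u$ lie in $N$.

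Next I would apply Taylor's theorem to $g$ at the base point $x$: for some $\xi$ between $x$ and $u$,
\begin{equation*}
    g(u) = g(x) + g'(x)(u-x) + \tfrac12 g''(\xi)(u-x)^2 .
\end{equation*}
Substituting $u - x = z f'(x)$ and rearranging gives the identity
\begin{equation*}
    g(u) - zf'(x) - g(x) = \big(g'(x) - 1\big)\,z f'(x) + \tfrac12 g''(\xi)\,z^2 f'(x)^2 .
\end{equation*}
Passing to polar coordinates $x = R\cos\theta$, $z = R\sin\theta$, I would estimate each summand: $g'(x) - 1 = O(x) = O(R)$ because $g'(0) = 1$ and $g'$ is analytic; $f'(x) = a + O(R)$ is bounded; $|z| \le R$; and $g''(\xi)$ is bounded on $N$ with $\xi \in N$. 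Hence the right-hand side is $O(R^2)$, and dividing by $R^2$ and letting $R \to 0$ the limit exists and equals the finite number $a\,g''(0)\cos\theta\sin\theta + \tfrac12 a^2 g''(0)\sin^2\theta$ (with $g''(0) = -f''(0)/a$, obtained by differentiating the quotient expansion). Setting $\bar{\nu}(\theta)$ equal to $a^2$ times this value then yields $\limsup_{R\to 0}(\cdots)/R^2 \le \bar{\nu}(\theta)/a^2$ with $\bar{\nu}(\theta)$ finite; moreover $\sup_\theta \bar{\nu}(\theta) < \infty$ by continuity and $2\pi$-periodicity, which is the form the later estimates will use.

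I do not anticipate a serious obstacle. The only point needing care is the uniformity of the remainder: one must check that $\xi$ remains in a fixed neighbourhood of $0$ on which $g''$ is bounded, which is immediate since $\xi$ lies between $x$ and $u = O(R)$. If one wants only finiteness of $\bar{\nu}(\theta)$ rather than its precise value, this can be obtained more cheaply by the crude bounds $|g'(x)-1| \le C_1 R$, $|f'(x)| \le C_2$, $|g''(\xi)| \le C_3$ on $N$, giving $\big|g(u) - zf'(x) - g(x)\big| \le \big(C_1 C_2 + \tfrac12 C_2^2 C_3\big)\,R^2$ and hence $\bar{\nu}(\theta) \le a^2\big(C_1 C_2 + \tfrac12 C_2^2 C_3\big)$ uniformly in $\theta$.
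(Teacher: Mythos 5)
Your proof is correct, but it takes a genuinely different route from the paper's. The paper proves the lemma by brute-force cancellation: it places the three terms over the common denominator $f'(x+zf'(x))f'(x)$, expands the numerator $\zeta_1-\zeta_2-\zeta_3$ using the series $f(t)=at+t^2b(t)$, $f'(t)=a+tc(t)$, and checks that all first-order (in $R$) contributions cancel, leaving an $O(R^2)$ remainder whose coefficient is the stated $\nu(\theta)$. You instead note that with $g:=f/f'$ (analytic near $0$ since $f'(0)=a\neq0$, with $g(0)=0$ and $g'(0)=1$) the bracketed quantity is exactly $g(u)-zf'(x)-g(x)$ for $u=x+zf'(x)$, and one Taylor expansion of $g$ at the point $x$ converts it to $(g'(x)-1)\,zf'(x)+\tfrac12 g''(\xi)\,z^2f'(x)^2$, each summand being manifestly $O(R^2)$; the cancellation the paper verifies by expanding three products is automatic from $g'(0)=1$. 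Your approach is shorter and more conceptual, and the Lagrange-remainder bound makes the uniformity in $\theta$ explicit (in the paper it is implicit in the $O(R^2)$ bookkeeping and the definition of $\bar\nu$ as a supremum over $\theta$); your limiting value $a g''(0)\cos\theta\sin\theta+\tfrac12 a^2 g''(0)\sin^2\theta$ with $g''(0)=-f''(0)/a$ agrees with the paper's $\nu(\theta)/a^2$ once one substitutes $b(0)=f''(0)/2$ and $c(0)=f''(0)$, so the two arguments are quantitatively consistent. What the paper's route buys in exchange is reuse: its expansions of $\zeta_1$ and $\zeta_3$ are recycled verbatim in the proof of Lemma~\ref{lem:denominator}, whereas your argument would need a separate (though similarly easy) computation there. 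One cosmetic remark: in the paper $\bar\nu$ is taken as $\sup_{\theta}\nu(\theta)$, a constant, while you define $\bar\nu(\theta)$ pointwise and then note $\sup_\theta\bar\nu(\theta)<\infty$ by continuity and periodicity; since the downstream use (Lemma~\ref{lem:1-hlimit}) takes a supremum over $\theta$ anyway, this difference is harmless.
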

\begin{proof}
	We first put everything over a common denominator:
	\begin{align}
		&\frac{f(x+zf'(x))}{f'(x+zf'(x))}-zf'(x) - \frac{f(x)}{f'(x)}\nonumber\\
		=&\frac{f(x+zf'(x))f'(x)-zf'(x)^2f'(x+zf'(x)) - f(x)f'(x+zf'(x))}{f'(x+zf'(x))f'(x)}\nonumber\\
		=&\frac{\overbrace{f(x+zf'(x))f'(x)}^{=:\zeta_1}-\overbrace{zf'(x)^2f'(x+zf'(x))}^{=:\zeta_2} - \overbrace{f(x)f'(x+zf'(x))}^{=:\zeta_3}}{a^2+O(R)}.\label{eqn:3-zetas}
	\end{align}
	First consider the term $\zeta_1$. We have 
	\begin{align*}
		f(x+zf'(x)) &= a(x+zf'(x)) + (x+zf'(x))^2b(x+zf'(x))\\
		&=a(x+z(a+xc(x))) + O(R^2)\\
		&=ax+a^2z + O(R^2)\\
		&=R(a\cos(\theta)+a^2\sin(\theta)) +O(R^2)
	\end{align*}
	Consequently,
	\begin{align*}
		\zeta_1 &= \big(R(a\cos(\theta)+a^2\sin(\theta)) +O(R^2)\big)f'(x)\\
		&=R(a^2\cos(\theta)+a^3\sin(\theta)) +O(R^2).
	\end{align*}
	Next consider the term $\zeta_2$. We have
	\begin{align*}
		\zeta_2 = zf'(x)^2 f'(x+zf'(x)) &= R\sin(\theta)(a+O(R))^2(a+O(R))\\
		&=R\big(a^3\sin(\theta) \big) + O(R^2)
	\end{align*}
	Finally, consider the term $\zeta_3$. We have
	\begin{align*}
		\zeta_3 = f(x)f'(x+zf'(x)) &=(ax+x^2b(x))\big( a+ (x+zf'(x))\; c(x+zf'(x)) \big)\\
		&=R\big(a^2\cos(\theta) \big)+ O(R^2)
	\end{align*}
	Combining all of the above, we have 
	$$
	\hspace{-1cm}\zeta_1-\zeta_2-\zeta_3 = O(R^2)
	$$
	Consequently,
	\begin{equation}
	\underset{R\rightarrow 0}{\limsup} \frac{\zeta_1-\zeta_2-\zeta_3}{R^2} = \underset{R\rightarrow 0}{\limsup} \frac{O(R^2)}{R^2} \leq \sup_{\theta \in \left[0,2\pi\right)} \nu(\theta)=: \bar{\nu}(\theta),\label{eqn:zeta}
	\end{equation}
where $\nu(\theta)$ is the expression (of $\theta$) that corresponds to the $R^2$ term of $\zeta_1-\zeta_2-\zeta_3$. This is enough to conclude the result. Note that it is a straightforward (albeit tedious) arithmetic exercise to explicitly verify 
$$
\nu(\theta)=a^{2} \sin\! \left(\theta \right) \left(-c\! \left(0\right)+b\! \left(0\right)\right) \left(\sin\! \left(\theta \right) a+2 \cos\! \left(\theta \right)\right).
	$$
    To do so, one simply replaces the $O(R^2)$ terms with explicit expressions; we have used the $O$ terms to simplify the exposition, and because the exact form of $\nu$ is not required for our objective of verifying quadratic convergence. The exact form of $\nu$ can also be verified effortlessly with the aid of a computer algebra system such as \textit{Maple}.
\end{proof}

\begin{lemma}\label{lem:denominator}
	\begin{align*}
		\underset{R \rightarrow 0}{\lim}\left.\left(\frac{zf(x+zf'(x))}{f'(x+zf'(x))} - \frac{f(x)(z-f(x))}{f'(x)}\right) \middle/ R^2 \right. = a
	\end{align*}
\end{lemma}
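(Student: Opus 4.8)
The plan is a direct second-order Taylor expansion in $R$. I use the expansions $f(t) = at + t^2 b(t)$ and $f'(t) = a + t c(t)$ prepared above together with the polar substitution $(x,z) = (R\cos\theta, R\sin\theta)$. First I observe that, since $f'(0) = a \neq 0$ and $f'$ is continuous, both $f'(x)$ and $f'(x + zf'(x))$ equal $a + O(R)$ and are therefore nonzero for $R$ small, so every quotient below is well-defined near the origin and $1/f'(x) = 1/a + O(R)$, and likewise for $1/f'(x+zf'(x))$.

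Next I expand the inner argument $u := x + zf'(x) = x + z\big(a + xc(x)\big) = R(\cos\theta + a\sin\theta) + O(R^2)$. This gives $f(u) = au + u^2 b(u) = aR(\cos\theta + a\sin\theta) + O(R^2)$, hence $f(u)/f'(u) = R(\cos\theta + a\sin\theta) + O(R^2)$; multiplying by $z = R\sin\theta$ yields the first term as $R^2\sin\theta(\cos\theta + a\sin\theta) + O(R^3)$. In the same way $f(x) = aR\cos\theta + O(R^2)$ and $z - f(x) = R(\sin\theta - a\cos\theta) + O(R^2)$, so $f(x)\big(z - f(x)\big) = aR^2\cos\theta(\sin\theta - a\cos\theta) + O(R^3)$, and dividing by $f'(x) = a + O(R)$ gives the second term as $R^2\cos\theta(\sin\theta - a\cos\theta) + O(R^3)$. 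Subtracting, the coefficient of $R^2$ in the difference is $\sin\theta(\cos\theta + a\sin\theta) - \cos\theta(\sin\theta - a\cos\theta) = a(\sin^2\theta + \cos^2\theta) = a$, so the difference equals $aR^2 + O(R^3)$; dividing by $R^2$ and letting $R \to 0$ gives the claimed limit $a$.

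I do not expect any serious obstacle. The only care needed is bookkeeping of the error terms: one checks that each $O(R^2)$ remainder in an individual factor contributes only at order $R^3$ once the products and quotients are formed, which holds because every surviving factor it multiplies is itself $O(R)$ (the factor $z$ in the first term, the factors $f(x)$ and $z-f(x)$ in the second) or a bounded reciprocal. The one mildly notable feature is that, although each of the two terms separately retains genuine $\theta$-dependence, their $R^2$-coefficients cancel through the Pythagorean identity to leave the $\theta$-free constant $a$; this is precisely the structure that makes the limit exist and be nonzero, and it is what the subsequent argument for \eqref{eqn:WTS} will exploit.
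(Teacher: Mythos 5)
Your proof is correct and follows essentially the same route as the paper: a second-order Taylor expansion of $f$ and $f'$ in polar coordinates, with the Pythagorean identity cancelling the $\theta$-dependence at order $R^2$. The only cosmetic difference is that the paper first clears denominators (writing the expression as $(z\zeta_1-(z-f(x))\zeta_3)/(a^2+O(R))$ and showing the numerator limit is $a^3$), whereas you expand each quotient directly; the computations are otherwise identical.
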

\begin{proof}
	We put all terms over a common denominator:
	\begin{align}
		&\frac{zf(x+zf'(x))}{f'(x+zf'(x))} - \frac{f(x)(z-f(x))}{f'(x)}\nonumber\\
		&=\frac{zf(x+zf'(x))f'(x)  -     f(x)(z-f(x))f'(x+zf'(x))}{f'(x+zf'(x))f'(x)}\nonumber\\
		&=\frac{\overbrace{zf(x+zf'(x))f'(x)}^{=z \zeta_1}  -     \overbrace{f(x)(z-f(x))f'(x+zf'(x))}^{=(z-f(x))\zeta_3} }{a^2+O(R)}\label{eqn:z1z3}
	\end{align}
	Using our expansion of $\zeta_1,\zeta_3$ above, we can compute
	\begin{align}
		\underset{R \rightarrow 0}{\lim} \frac{z \zeta_1 -(z-f(x)) \zeta_3}{R^2} & = \sin(\theta)\big(a^2 \cos(\theta) + a^3\sin(\theta)\big)\nonumber \\
		&- (\sin(\theta)- a\cos(\theta))\cos(\theta)a^2\nonumber \\
		&= a^3 \sin(\theta)^2 + a^3 \cos(\theta)^2\nonumber \\
		&= a^3.\label{eqn:a_cubed}
	\end{align}
Combining \eqref{eqn:z1z3} and \eqref{eqn:a_cubed}, we have the result.
\end{proof}

\begin{lemma}\label{lem:bigratio}
    \begin{equation*}
    \underset{R\rightarrow 0}{\limsup} \;
    \frac{
    \frac{f(x + zf'(x))}{f'(x + zf'(x))} - zf'(x) - \frac{f(x)}{f'(x)}
    }{
    \frac{z f(x + zf'(x))}{f'(x + zf'(x))} - \frac{f(x)(z - f(x))}{f'(x)}
    }
    \leq \frac{\bar{\nu}(\theta)}{a^3}
    \end{equation*}
\end{lemma}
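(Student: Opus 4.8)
The plan is to reduce the estimate to the two lemmas just proved by cancelling the common factor $R^2$ from the top and bottom of the displayed quotient. Write $N(x,z) := \frac{f(x+zf'(x))}{f'(x+zf'(x))} - zf'(x) - \frac{f(x)}{f'(x)}$ for its numerator and $D(x,z) := \frac{zf(x+zf'(x))}{f'(x+zf'(x))} - \frac{f(x)(z-f(x))}{f'(x)}$ for its denominator, so that the quantity to be bounded is $N/D = (N/R^2)/(D/R^2)$. Lemma~\ref{lem:zeta} supplies $\limsup_{R\to 0} N/R^2 \le \bar\nu(\theta)/a^2$, and Lemma~\ref{lem:denominator} supplies $\lim_{R\to 0} D/R^2 = a$. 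Both quantities stay bounded as $R\to 0$ (the leading coefficients involved depend continuously on the polar angle $\theta$, which ranges over the compact circle), so no extended-real technicalities arise.

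First I would isolate the elementary fact that if a real sequence $\phi_R$ converges to a positive limit $c$, then $\limsup_{R\to 0}(\psi_R/\phi_R) = c^{-1}\limsup_{R\to 0}\psi_R$ for every real sequence $\psi_R$; this follows by passing to subsequences that realise the two $\limsup$s. Applying it with $\psi_R = N/R^2$ and $\phi_R = D/R^2 \to a$ gives
$$\limsup_{R\to 0}\frac{N}{D} = \frac 1a \limsup_{R\to 0}\frac{N}{R^2} \le \frac 1a \cdot \frac{\bar\nu(\theta)}{a^2} = \frac{\bar\nu(\theta)}{a^3},$$
which is exactly the assertion. A useful by-product is that $D/R^2 \to a \neq 0$ forces $D$ to be nonzero for all $R$ small enough, which is precisely the nondegeneracy flagged in the footnote to \eqref{eqn:large_rational_term}: the collinear branch of \eqref{LT def} therefore cannot occur for $y$ near $0$, and the rational expressions for $\gamma_1,\gamma_2$ are legitimate there.

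The single point requiring care — and thus the main (if minor) obstacle — is the sign of $a = f'(0)$: the quotient fact above needs $\phi_R$ to tend to a \emph{positive} number, whereas \ref{A1} only rules out $a=0$. I would dispose of the case $a<0$ by a reflection argument: replacing $f$ by $-f$ sends $B = {\rm gra}\,f$ to its mirror image in the line $A$, conjugates $T_{A,B}$ — and hence $L_{T_{A,B}}$, $\nabla V$, and $T_{A,B}^{-1}$ — by that reflection, and changes $a$ to $-a$ while transforming every quantity appearing in Lemmas~\ref{lem:zeta} and \ref{lem:denominator} consistently; so it is enough to argue the case $a>0$. One could instead re-run the short sign-tracking computations in those proofs directly, but the reflection reduction is cleaner and avoids reproving the estimates.
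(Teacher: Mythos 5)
Your proposal is correct and follows essentially the same route as the paper: divide numerator and denominator by $R^2$ and combine Lemma~\ref{lem:zeta} (limsup of the numerator) with Lemma~\ref{lem:denominator} (limit of the denominator). Your additional care about the sign of $a=f'(0)$ (since dividing a limsup by a negative limit flips the inequality) is a legitimate refinement that the paper's one-line proof passes over silently, and your reflection reduction to the case $a>0$ is a sound way to handle it.
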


\begin{proof}
	Lemma~\ref{lem:zeta} treats the limit supremum of the numerator term over $R^2$, while  Lemma~\ref{lem:denominator} treats the limit of the denominator term over $R^2$. Combining these two Lemmas yields the result.
\end{proof}

\begin{lemma}\label{lem:1-hlimit}
	The following holds:
	\begin{align*}
		\underset{R\rightarrow 0}{\limsup}&\frac{1-h(x,z)}{R} \leq \sup_{\theta \in \left[0,2\pi\right)}\frac{( \sin(\theta)-a\cos(\theta)) \bar{\nu}(\theta)}{a^3} =: \psi(\theta)
	\end{align*}
\end{lemma}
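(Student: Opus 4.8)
The plan is to rewrite $1-h(x,z)$ as a single rational function in the coordinates $(x,z)$ of $w$ and then recognise its numerator and denominator as exactly the quantities already controlled in Lemmas~\ref{lem:zeta} and~\ref{lem:denominator}. Write $h(x,z) = N/D$, where $N := (z-f(x))zf'(x) + \tfrac{f(x)f(x+zf'(x))}{f'(x+zf'(x))}$ and $D := -\tfrac{f(x)(z-f(x))}{f'(x)} + \tfrac{zf(x+zf'(x))}{f'(x+zf'(x))}$ are the numerator and denominator in the definition of $h$. Then $1-h = (D-N)/D$, and grouping the two terms containing $f(x+zf'(x))/f'(x+zf'(x))$ and factoring $z-f(x)$ out of the remaining terms yields the identity
\begin{equation*}
D - N = (z - f(x))\left(\frac{f(x+zf'(x))}{f'(x+zf'(x))} - zf'(x) - \frac{f(x)}{f'(x)}\right).
\end{equation*}
The parenthesised factor is precisely the quantity appearing in Lemma~\ref{lem:zeta}, whose $\limsup$ over $R^2$ is at most $\bar\nu(\theta)/a^2$, and $D$ is precisely the quantity in Lemma~\ref{lem:denominator}, whose limit over $R^2$ equals $a$ (so in particular $D \neq 0$ for small $R$ and $h$ is well defined there).

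Next I would expand $z-f(x)$ to leading order. Using the Taylor expansion $f(t) = at + t^2 b(t)$ from \eqref{eqn:Taylorseries} together with the polar substitution $(x,z) = (R\cos\theta, R\sin\theta)$, one gets $z - f(x) = R\sin(\theta) - aR\cos(\theta) + O(R^2) = R(\sin(\theta)-a\cos(\theta)) + O(R^2)$, so $(z-f(x))/R \to \sin(\theta)-a\cos(\theta)$. Then write
\begin{equation*}
\frac{1-h(x,z)}{R} = \frac{D-N}{RD} = \frac{z-f(x)}{R}\cdot\frac{1}{R^2}\left(\frac{f(x+zf'(x))}{f'(x+zf'(x))} - zf'(x) - \frac{f(x)}{f'(x)}\right)\cdot\frac{R^2}{D},
\end{equation*}
and pass to the limit superior as $R \to 0$. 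Applying Lemma~\ref{lem:zeta} to the middle factor, the limit $(z-f(x))/R \to \sin(\theta)-a\cos(\theta)$ to the first factor, and Lemma~\ref{lem:denominator} (in the form $R^2/D \to 1/a$) to the last factor, gives $\limsup_{R\to 0}(1-h)/R \leq (\sin(\theta)-a\cos(\theta))\bar\nu(\theta)/a^3$; taking the supremum over $\theta \in [0,2\pi)$ produces the claimed bound $\psi(\theta)$.

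The one delicate point is the bookkeeping involved in combining three factors under a single $\limsup$: $\limsup$ is not multiplicative in general, and $\sin(\theta)-a\cos(\theta)$ may be negative. As in the rest of this appendix, I would sidestep this by retaining explicit leading-order coefficients with honest $O(R)$ remainders — writing both the parenthesised numerator factor and $D$ as (their leading $R^2$-coefficient)$\,\cdot R^2 + O(R^3)$ by substituting the expansions of $f$ and $f'$ — so that, for each fixed $\theta$, every factor genuinely converges and the product has a true limit; the passage to $\sup_\theta$ is then immediate. This is the same device already used (via the $O$-notation) in the proofs of Lemmas~\ref{lem:zeta} and~\ref{lem:denominator}, so no new machinery is needed.
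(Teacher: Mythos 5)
Your proposal is correct and follows essentially the same route as the paper: the same rewriting of $1-h(x,z)$ as $(D-N)/D$, the same factorisation of the numerator as $(z-f(x))\bigl(\tfrac{f(x+zf'(x))}{f'(x+zf'(x))}-zf'(x)-\tfrac{f(x)}{f'(x)}\bigr)$, the same leading-order expansion $z-f(x)=R(\sin\theta-a\cos\theta)+O(R^2)$, and the same use of Lemmas~\ref{lem:zeta} and~\ref{lem:denominator} (which the paper merely packages as Lemma~\ref{lem:bigratio} before combining). Your closing remark about keeping explicit $O(R)$ remainders so each factor genuinely converges for fixed $\theta$ is a fair (and slightly more careful) handling of the limsup bookkeeping that the paper treats implicitly.
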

\begin{proof}
	We first put all terms over a common denominator:
	\begin{align*}
	1-h(x,z) = -\frac{\left((z-f(x))zf'(x) + \frac{f(x)f(x+zf'(x))}{f'(x+zf'(x))} + \frac{f(x)(z-f(x))}{f'(x)} - \frac{zf(x+zf'(x))}{f'(x+zf'(x))} \right)}{\frac{zf(x+zf'(x))}{f'(x+zf'(x))} - \frac{f(x)(z-f(x))}{f'(x)}}
	\end{align*}
	Noticing that the numerator's 2nd and 4th rational terms share a common factor, the numerator rewrites to
	\begin{align*}
		(z-f(x))zf'(x) + \frac{f(x+zf'(x))}{f'(x+zf'(x))}(f(x)-z) + \frac{f(x)(z-f(x))}{f'(x)}
	\end{align*}
    Further noticing that the first and last rational terms share a common factor, the numerator further rewrites to
	\begin{align*}
		&(z-f(x))\left(zf'(x) +\frac{f(x)}{f'(x)}\right) + \frac{f(x+zf'(x))}{f'(x+zf'(x))}(f(x)-z) \\
		=&(f(x)-z) \left(\frac{f(x+zf'(x))}{f'(x+zf'(x))} - zf'(x) -\frac{f(x)}{f'(x)}  \right)
	\end{align*}
	Altogether, we have that
	\begin{align}
		1-h(x,z) =\frac{-(f(x)-z) \left(\frac{f(x+zf'(x))}{f'(x+zf'(x))} - zf'(x) -\frac{f(x)}{f'(x)}  \right) }{ \left({\frac{zf(x+zf'(x))}{f'(x+zf'(x))} - \frac{f(x)(z-f(x))}{f'(x)}} \right)} \label{eqn:biglimita}
	\end{align}
	We also have that 
	\begin{align}
		f(x)-z = ax+x^2b(x)-z &= R(a\cos(\theta)+R\cos(\theta)^2b(R\cos(\theta))- \sin(\theta))\nonumber \\
        &=R(a\cos(\theta)-\sin(\theta))+O(R^2).
        \label{eqn:biglimitb}
	\end{align}
	Combining \eqref{eqn:biglimita}, \eqref{eqn:biglimitb}, and Lemma~\ref{lem:bigratio} yields the result.
\end{proof}

\begin{lemma}\label{lem:LT}
	The following equalities hold.
	\begin{align*}
		(L_T y)_2 & = R^2\sin(\theta)\frac{(1-h(x,z))}{R}; \quad \text{and}\\
		(L_T y)_1 
		&=R^2 \frac{\cos(\theta)}{a+xc(x)}\left(a\frac{1-h(x,z)}{R} + \cos(\theta)(c(x)-b(x)h(x,z)) \right).
	\end{align*}
\end{lemma}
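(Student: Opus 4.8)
The plan is to establish both identities as \emph{exact} algebraic equalities — no limiting argument is needed here — by substituting the closed forms \eqref{LTX} and \eqref{LTZ} for $(L_Ty)_1$ and $(L_Ty)_2$ together with the Taylor expansions $f(x) = ax + x^2 b(x)$ and $f'(x) = a + xc(x)$ from \eqref{eqn:Taylorseries}, and then rewriting everything in terms of the polar coordinates $x = R\cos\theta$, $z = R\sin\theta$ of $w$. The entire content of the lemma is a repackaging of already-derived formulas into the shape that the subsequent limit computations will consume.

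For the second coordinate, \eqref{LTZ} gives $(L_Ty)_2 = z - h(x,z)z = z\bigl(1-h(x,z)\bigr)$; substituting $z = R\sin\theta$ and inserting a factor $R/R$ yields $(L_Ty)_2 = R^2\sin(\theta)\,\frac{1-h(x,z)}{R}$, which is the stated form. For the first coordinate I would start from \eqref{LTX}, namely $(L_Ty)_1 = x - h(x,z)\frac{f(x)}{f'(x)}$, and use $\frac{f(x)}{f'(x)} = \frac{x(a+xb(x))}{a+xc(x)}$ to place the expression over the common denominator $a + xc(x) = f'(x)$, obtaining
\[
    (L_Ty)_1 = \frac{x\bigl[(a+xc(x)) - h(x,z)(a+xb(x))\bigr]}{a+xc(x)} = \frac{x\bigl[a(1-h(x,z)) + x(c(x) - b(x)h(x,z))\bigr]}{a+xc(x)}.
\]
Then, substituting $x = R\cos\theta$ in the leading factor and in the term $x(c(x)-b(x)h(x,z))$, and factoring one power of $R$ out of the bracket so that $a(1-h) = R\cdot a\frac{1-h}{R}$, rearranges this to $R^2\,\frac{\cos(\theta)}{a+xc(x)}\bigl(a\,\frac{1-h(x,z)}{R} + \cos(\theta)(c(x) - b(x)h(x,z))\bigr)$, as required; one verifies the match simply by multiplying the target back out and collecting the powers of $R$.

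I do not expect a genuine obstacle. The only points requiring care are the factoring of the numerator over the denominator $f'(x) = a + xc(x)$ and the bookkeeping of which powers of $R$ are pulled outside the bracket; and one should observe that, since $f'(0) = a \neq 0$, the denominator $a + xc(x)$ is nonzero for all $w$ in a sufficiently small ball about $0$, so that the divisions performed are legitimate there (consistent with the standing restriction $y \in \mathcal{B}_{\min\{r_0,r_1\}}(0)$).
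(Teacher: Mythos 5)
Your proposal is correct and follows essentially the same route as the paper's proof: factor $(L_Ty)_2 = z(1-h(x,z))$ directly, and for $(L_Ty)_1$ substitute $f(x)=ax+x^2b(x)$, $f'(x)=a+xc(x)$ into \eqref{LTX}, place everything over the denominator $a+xc(x)$, and regroup as $a(1-h)+x(c-bh)$ before converting to polar coordinates. The observation that $a+xc(x)\neq 0$ near the origin is a sensible (if implicit in the paper) addition.
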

\begin{proof}
	Since $(L_Ty)_2 = z-h(x,z)z = z(1-h(x,z))$, the correctness of the first statement is clear. To see the correctness of the second statement, simply observe
	\begin{align*}
		(L_T y)_1 &= x-h(x,z)\frac{f(x)}{f'(x)}\\
		&= x-h(x,z)\frac{ax+x^2b(x)}{a+xc(x)}\\
		&= x\left( 1-h(x,z)\frac{a+xb(x)}{a+xc(x)} \right)\\
		&=\frac{x}{a+xc(x)} \left(a+xc(x)-(a+x b(x))h(x,z) \right)\\
		&=\frac{x}{a+xc(x)}\left( a(1-h(x,z)) + x\left( c(x)-b(x)h(x,z)\right) \right)\\
		&=R^2 \frac{\cos(\theta)}{a+xc(x)}\left(a\frac{1-h(x,z)}{R} + \cos(\theta)(c(x)-b(x)h(x,z)) \right).
	\end{align*}
\end{proof}

\begin{proof}[Proof of Lemma~\ref{lem:ratio_complete}]
From Lemma~\ref{lem:LT}, we can express the ratio as
	\begin{align*}
	\frac{\|(x,z)\|^2}{\|L_Ty\|_1} &= \frac{R^2}{R^2 \left|\sin(\theta)\frac{1-h(x,z)}{R} \right|+R^2 \left|\frac{\cos(\theta)}{a+xc(x)}\left(a\frac{1-h(x,z)}{R} + \cos(\theta)(c(x)-b(x)h(x,z)) \right) \right|}\\
	&=\frac{1}{\left|\sin(\theta)\frac{1-h(x,z)}{R} \right|+ \left|\frac{\cos(\theta)}{a+xc(x)}\left(a\frac{1-h(x,z)}{R} + \cos(\theta)(c(x)-b(x)h(x,z)) \right) \right|}
	\end{align*}
Ergo, it suffices to show that the denominator of the latter term stays bounded as $R \downarrow 0$. Consider the first term. 
\begin{align*}
\limsup_{R\downarrow 0} \left |\sin(\theta) \frac{1-h(x,z)}{R}\right| \leq \limsup_{R\downarrow 0}\left| \frac{1-h(x,z)}{R}\right|
\stackrel{\text{(Lemma~\ref{lem:1-hlimit})}}{\leq} \psi(\theta)< \infty.
\end{align*}
Likewise, for the second denominator term, we again use Lemma~\ref{lem:1-hlimit} to obtain
\begin{align*}
    &\limsup_{R \downarrow 0} \left|\frac{\cos(\theta)}{a+xc(x)}\left(a\frac{1-h(x,z)}{R} + \cos(\theta)(c(x)-b(x)h(x,z)) \right) \right|\\
    &\leq  \left| \frac{1}{a} \right| \left( |a| \psi(\theta) + |c(0)-b(0)| \right) < \infty.
\end{align*}
This shows the result.
\end{proof}

\bibliographystyle{plain}
\bibliography{bibliography}

\end{document}